\begin{document}
\newtheorem{rem}{Remark}

\newtheorem{lem}{Lemma}
\newtheorem{thm}{Theorem}
\newtheorem{example}{Example}
\newtheorem{assumption}{Assumption}
\newtheorem{prop}{Proposition}
\newtheorem{defn}{Definition}

\title{\LARGE \bf
Extracting Flexibility of Heterogeneous Deferrable Loads via Polytopic Projection Approximation
}

\author{Lin Zhao, He Hao, and Wei Zhang
\thanks{
L. Zhao and W. Zhang are with the Dept. of Electrical and Computer Engineering, The Ohio State University, Columbus, OH, USA, 43210
{\small (email: zhao.833@osu.edu; zhang.491@osu.edu)}
\newline 
\indent H. Hao is with Pacific Northwest National Laboratory, P.O. Box 999, 99352, Richland, Washington, USA
{\small (email: He.Hao@pnnl.gov)}}
}

\maketitle
\begin{abstract}
Aggregation of a large number of responsive loads presents great power
flexibility for demand response. An effective control and coordination
scheme of flexible loads requires an accurate and tractable model
that captures their aggregate flexibility. This paper proposes a novel
approach to extract the aggregate flexibility of deferrable loads
with heterogeneous parameters using polytopic projection approximation.
First, an exact characterization of their aggregate flexibility is
derived analytically, which in general contains exponentially many
inequality constraints with respect to the number of loads. In order
to have a tractable solution, we develop a numerical algorithm that
gives a sufficient approximation of the exact aggregate flexibility.
Geometrically, the flexibility of each individual load is a polytope,
and their aggregation is the Minkowski sum of these polytopes. Our
method originates from an alternative interpretation of the Minkowski
sum as projection. The aggregate flexibility can be viewed as the
projection of a high-dimensional polytope onto the subspace representing
the aggregate power. We formulate a robust optimization problem to
optimally approximate the polytopic projection with respect to the
homothet of a given polytope. To enable efficient and parallel computation
of the aggregate flexibility for a large number of loads, a muti-stage
aggregation strategy is proposed. The scheduling policy for individual
loads is also derived. Finally, an energy arbitrage problem is solved
to demonstrate the effectiveness of the proposed method.
\end{abstract}

\section{Introduction}

The future power system will be modernized with advanced metering
infrastructure, bilateral information communication network, and intelligent
monitoring and control system to enable a smarter operation~\cite{BhattShahJani2014}.
The transformation to the smart grid is expected to facilitate the
deep integration of renewable energy, improve the reliability and
stability of the power transmission and distribution system, and increase
the efficiency of power generation and energy consumption.

Demand response program is a core subsystem of the smart grid, which
can be employed as a resource option for system operators and planners
to balance the power supply and demand. The demand side control of
responsive loads has attracted considerable attention in recent years~\cite{Callaway2009,Koch2011,Kamgarpour2013a,ZhaoZhang2015}.
An intelligent load control scheme should deliver a reliable power
resource to the grid, while maintaining a satisfactory level of power
usage to the end-user. One of the greatest technical challenges of
engaging responsive loads to provide grid services is to develop control
schemes that can balance the aforementioned two objectives~\cite{CallawayPIEEE}.
To achieve such an objective, a hierarchical load control structure
via aggregators is suggested to better integrate the demand-side resources
into the power system operation and control~\cite{RuizCobeloOyarzabal2009,CallawayPIEEE}.

In the hierarchical scheme, the aggregator performs as an interface
between the loads and the system operator. It aggregates the flexibility
of responsive loads and offers it to the system operator. In the meantime,
it receives dispatch signals from the system operator, and execute
appropriate control to the loads to track the dispatch signal. Therefore,
an aggregate flexibility model is fundamentally important to the design
of a reliable and effective demand response program. It should be
detailed enough to capture the individual constraints while simple
enough to facilitate control and optimization tasks. Among various
modeling options for the adjustable loads such as thermostatically
controlled loads (TCLs), the average thermal battery model~\cite{Hao2015,ZhaoZhang2016,Mathieu2013a,Kamgarpour2013a}
aims to quantify the aggregate flexibility, which is the set of the
aggregate power profiles that are admissible to the load group. It
offers a simple and compact model to the system operator for the provision
of various ancillary services. Apart from the adjustable loads, deferrable
loads such as pools and plug-in vehicles (PEVs) can also provide significant
power flexibility by shifting their power demands to different time
periods. However, different from the adjustable loads, it is more
difficult to characterize the flexibility of deferrable loads due
to the heterogeneity in their time constraints.

In this paper, we focus on modeling the aggregate flexibility for
control and planning of a large number of deferrable loads. There
is an ongoing effort on the characterization of the aggregate flexibility
of deferrable loads~\cite{NayyarTaylorSubramanianEtAl2013,LiuLiZhangEtAl2013,Hao2014,Chen2015}.
An empirical model based on the statistics of the simulation results
was proposed in~\cite{LiuLiZhangEtAl2013}. A necessary characterization
was obtained in~\cite{NayyarTaylorSubramanianEtAl2013} and further
improved in~\cite{Hao2014}. For a group of deferrable loads with
homogeneous power, arrival time, and departure time, a majorization
type exact characterization was reported in~\cite{Hao2014}. With
heterogeneous departure times and energy requirements, a tractable
sufficient and necessary condition was obtained in~\cite{Chen2015},
and was further utilized to implement the associated energy service
market~\cite{ChenQiuVaraiya2015}. Despite these efforts, a sufficient
characterization of the aggregate flexibility for general heterogeneous
deferrable loads remains a challenge.

To address this issue, we propose a novel geometric approach to extract
the aggregate flexibility of heterogeneous deferrable loads. Geometrically,
the aggregate flexibility modeling amounts to computing the Minkowski
sum of multiple polytopes, of which each polytope represents the flexibility
of individual load. However, calculating the Minkowski sum of polytopes
under facet representation is generally NP-hard~\cite{Tiwary2008}.
Interestingly, we are able to show that for a group of loads with
general heterogeneity, the exact aggregate flexibility can be characterized
analytically. But the problem remains in the sense that there are
generally exponentially many inequalities with respect to the number
of loads and the length of the time horizons, which can be intractable
when the load population size or the number of steps in the considered
time horizon is large. Therefore, a tractable characterization of
the aggregate flexibility is desired.

For deferrable loads with heterogeneous arrival and departure times,
the constraint sets are polytopes that are contained in different
subspaces. Alternative to the original definition of the Minkowski
sum, we find it beneficial to regard it as a projection operation.
From the latter perspective, the aggregate flexibility is considered
as the projection of a higher dimensional polytope to the subspace
representing the aggregate power of the deferrable loads. Therefore,
instead of approximating the Minkowski sum directly by its definition,
we turn to approximating the associated projection operation. To this
end, we formulate an optimization problem which approximates the projection
of a full dimensional polytope via finding the maximum homothet of
a given polytope, i.e., the dilation and translate of that polytope.
The optimization problem can be solved very efficiently by solving
an equivalent linear program. Furthermore, we propose a ``divide
and conquer'' strategy which enables efficient and parallel computation
of the aggregate flexibility of the load group. The scheduling policy
for each individual load is derived simultaneously along the aggregation
process. Finally, we apply our model to the PEV energy arbitrage problem,
where given predicted day-ahead energy prices, the optimal power profile
consumed by the load group is calculated to minimize the total energy
cost. The simulation results demonstrate that our approach is very
effective at characterizing the feasible aggregate power flexibility
set, and facilitating finding the optimal power profile.

There are several closely-related literature on characterizing flexibility
of flexible loads. In our previous work~\cite{ZhaoZhang2016}, a
geometric approach was proposed to optimally extract the aggregate
flexibility of heterogeneous TCLs based on the given individual physical
models. The simulation demonstrated accurate characterization of the
aggregate flexibility which was very close to the exact one. However,
this approach cannot be applied to the deferrable loads directly.
Similar to~\cite{ZhaoZhang2016} which sought a special class of
polytopes to facilitate fast calculation of Minkowski sum, the authors
in~\cite{MullerSundstromSzaboEtAl2015} proposed to characterize
the power flexibility using Zonotopes. Different from~\cite{ZhaoZhang2016},
this method could deal with the time heterogeneity as appeared in
the deferrable loads. In addition, both approaches extracted the flexibility
of \textit{individual} load.\textbf{ }In comparison, the approach
proposed in this paper is a \textit{batch} processing method: it directly
approximates the aggregate flexibility of a group of loads, which
could mitigate the losses caused by the individual approximation as
emphasized in~\cite{MullerSundstromSzaboEtAl2015}.

\textbf{Notation}: The facet representation of a polytope $\mathcal{P}\subset\mathbb{R}^{m}$
is a bounded solution set of a system of finite linear inequalities
\cite{Henk1997}: $\mathcal{P}:=\{x:Ax\leq c\}$, where throughout
this paper $\leq$ (or $<$, $\geq$,~$>$) means elementwise inequality.
A polytope $\mathcal{P}\subset\mathbb{R}^{m}$ is called full dimensional,
if it contains an interior point in $\mathbb{R}^{m}$. Given a full
dimensional polytope $\mathcal{P}$ in $\mathbb{R}^{m}$, a scale
factor $\lambda>0$, and a translate factor $\mu\in\mathbb{R}^{m}$,
the set $\lambda\mathbb{B}+\mu$ is called a homothet of $\mathbb{B}$.
We use $\biguplus$ to denote the Minkowski sum of multiple sets,
and $\oplus$ of two sets. We use ${\bf 1}_{m}$ to represent the
$m$ dimensional column vector of all ones, $I_{m}$ the $m$ dimensional
identity matrix, and $\mathbbm{1}_{C}$ the indicator function of
the set $C$. The bold $\mathbf{0}$ denotes the column vector of
all $0$'s with appropriate dimension. For two column vectors $u$
and $v$, we write $(u,v)$ for the column vector $[u^{T},v^{T}]^{T}$
where no confusion shall arise.

\section{Problem Formulation}

We consider the problem of charging a group of $N$ PEVs. The energy
state of each PEV can be described by a discrete time difference equation
on a finite time horizon $[0,T\delta]$, 
\begin{equation}
x_{t}=x_{t-1}+\delta u_{t},\label{eq:deferrable}
\end{equation}
where $x_{t}$ is the state of charge (SoC) with initial condition
$x_{0}=0$, and $u_{t}\in[0,p]$ with $p>0$ is the charging power
supplied to the vehicle during $[(t-1)\delta,t\delta)$. Let $t\in\mathrm{\mathbb{T}}:=\{1,2,\dots m\}$
denote the time interval $[(t-1)\delta,t\delta)$, where without loss
of generality, we will assume the time unit $\delta=1$ hour in the
sequel. Moreover, the PEV must be charged during a time window $\mathbb{A}:=\{a,\dots,d\}\subset\mathrm{\mathbb{T}}$
where $a$ is its arrival time, $d$ is its departure time, and $a<d$..
At the deadline $d$, the PEV is supposed to be charged with an SoC
$x_{d}\in[\underline{E},\bar{E}]$, where we assume that $\bar{E}^{i}>\underline{E}^{i}$.
The load is called deferrable if $\bar{E}<(d-a)p$. A charging power
profile $u:=[u_{1},\dots,u_{m}]^{T}$ is called admissible if the
load is only charged within $\mathbb{\mathbb{A}}$ and its SoC at
$t=m$ is within $[\underline{E},\bar{E}]$.

We differentiate the $i^{\mbox{th}}$ PEV ($\forall i\in\mathcal{N}:=\{1,2,\cdots,N\}$)
by using a superscript $i$ on the variables introduced above. The
charging task of the $i^{\mbox{th}}$ PEV is determined by $\Omega^{i}:=\{a^{i},d^{i},p^{i},\underline{E}^{i},\bar{E}^{i}\}$.
Let $\mathcal{P}^{i}$ be the set of all the admissible power profiles
of the $i^{\mbox{th}}$ load. It can be described as,
\begin{equation}
\mathcal{P}^{i}:=\left\{ \begin{array}{l}
u^{i}\in\mathbb{R}^{m}:u_{t}^{i}\in[0,p^{i}],\forall t\in\mathbb{A}^{i},\\
u_{t}^{i}=0,\forall i\in\mathrm{\mathbb{T}}\backslash\mathbb{\mathbb{A}}^{i}\mbox{ and }{\bf 1}_{m}^{T}u^{i}\in[\underline{E}^{i},\bar{E}^{i}]
\end{array}\right\} \label{eq:pi}
\end{equation}

It is straightforward to see that each $\mathcal{P}^{i}$ is a convex
polytope. In addition, we say $\mathcal{P}^{i}$ is of codimension
$m-\left|\mathbb{A}^{i}\right|$ if its affine hull is a $\left|\mathbb{A}^{i}\right|$
dimensional subspace of $\mathbb{R}^{m}$.

In the smart grid, the aggregator is responsible for procuring a generation
profile from the whole market to service a group of loads. We define
the generation profile that meets the charging requirements of all
PEVs as follows.
\begin{defn}
A generation profile $u$ is called \textit{adequate} if there exists
a decomposition $u=\sum_{i=1}^{N}u^{i}$, such that $u^{i}$ is an
admissible power profile for the $i^{\mbox{th}}$ load, i.e., $u^{i}\in\mathcal{P}^{i}$. 
\end{defn}
We call the set of all the adequate generation profiles the \textit{aggregate
flexibility} of the load group. It can be defined as the Minkowski
sum of the admissible power sets of each load,
\begin{multline}
\mathcal{P}=\biguplus_{i=1}^{N}\mathcal{P}^{i}:=\left\{ u\in\mathbb{R}^{m}:u=\sum_{i=1}^{N}u^{i},~u^{i}\in\mathcal{P}^{i}\right\} .\label{eq:P_AF}
\end{multline}

It is straightforward to show that $\mathcal{P}$ is also a convex
polytope whose codimension is to be determined by the parameters $\Omega$
of the deferrable loads.

\section{Exact Characterization of the Aggregate Flexibility}

The numerical complexity of the existing algorithms for calculating
the Minkowski sum is rather expensive (See~\cite{Weibel2007} for
some numerical results). In general, calculating $\mathcal{P}_{1}\oplus\mathcal{P}_{2}$
when $\mathcal{P}_{1}$ and $\mathcal{P}_{2}$ are polytopes specified
by their facets is NP-hard~\cite{Tiwary2008}. However, for the particular
problem of PEV charging, it is possible to characterize the exact
aggregate flexibility $\mathcal{P}$ analytically. Such characterization
is built on the results from the matrix feasibility problem and from
the network flow theory, both of which are intrinsically connected
with the PEV charging problem.
\begin{thm}
\label{thm:iff}Consider a group of PEVs or deferrable loads with
heterogeneous parameters $\Omega^{i}=\{a^{i},d^{i},p^{i},\underline{E}^{i},\bar{E}^{i}\}$,
$i\in\mathcal{N}$. Then the set $\mathcal{P}$ of adequate generation
profiles consists of those $u=[u_{1},u_{2},\cdots,u_{m}]^{T}$ which
satisfy 
\begin{multline}
\min\left\{ \sum_{i\in\alpha}\bar{E}^{i}-\sum_{t\in\beta}u_{t},\ \sum_{t\in\mathrm{\beta^{c}}}u_{t}-\sum_{i\in\alpha^{c}}\underline{E}^{i}\right\} \\
\geq-\sum_{i\in\alpha^{c}}\left|\beta\cap\mathbb{A}^{i}\right|p^{i},\label{eq:FullHetero}
\end{multline}
for all subsets $\alpha\subset\mathcal{N}$ and $\beta\subset\mathrm{\mathbb{T}}$,
where $\alpha^{c}$ and $\beta^{c}$ are the complement sets of $\alpha$
and $\beta$ in $\mathcal{N}$ and $\mathrm{\mathbb{T}}$, respectively.
\end{thm}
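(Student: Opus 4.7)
The plan is to prove \eqref{eq:FullHetero} by treating necessity and sufficiency separately. Necessity is a direct algebraic computation; sufficiency I would derive from a network-flow feasibility argument.

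For necessity, fix any decomposition $u=\sum_{i=1}^N u^i$ with $u^i\in\mathcal{P}^i$ and any subsets $\alpha\subset\mathcal N$, $\beta\subset\mathbb T$. Splitting
\[
\sum_{t\in\beta} u_t = \sum_{i\in\alpha}\sum_{t\in\beta}u^i_t+\sum_{i\in\alpha^c}\sum_{t\in\beta}u^i_t
\]
and bounding the first inner sum by $\sum_t u^i_t\le\bar E^i$ and the second by $|\beta\cap\mathbb A^i|p^i$ immediately yields the first clause of \eqref{eq:FullHetero}. A mirror computation on $\sum_{t\in\beta^c}u_t$, now invoking $\sum_t u^i_t\ge\underline E^i$ for $i\in\alpha^c$, yields the second.

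For sufficiency I would build the bipartite $s$-to-$T$ network whose feasible flows are in bijection with the decompositions $u=\sum_i u^i$ witnessing $u\in\mathcal P$: source $s$, sink $T$, one node per load, and one node per time slot; arcs $s\to i$ with capacity $[\underline E^i,\bar E^i]$, arcs $i\to t'$ with capacity $[0,p^i]$ for $t\in\mathbb A^i$, and arcs $t'\to T$ with capacity $[u_t,u_t]$. Closing with a return arc $T\to s$ of capacity $[V,V]$, where $V=\sum_t u_t$, reduces the existence of a decomposition to the existence of a feasible circulation. Hoffman's circulation theorem then says this is equivalent to the upper-capacity of $\delta^+(S)$ dominating the lower-capacity of $\delta^-(S)$ for every vertex subset $S$. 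Parametrizing a cut by the loads and time slots contained in $S$, the two non-trivial orientations ($s\in S$ or $T\in S$) select a pair $(\alpha,\beta)\subset\mathcal N\times\mathbb T$, and a direct accounting of crossing-arc capacities collapses the corresponding cut inequalities to exactly the two clauses of \eqref{eq:FullHetero}.

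The crux of the argument is showing that this two-parameter combinatorial family of cuts suffices; the network/transportation structure of the underlying LP is what makes this work, since its constraint matrix is totally unimodular and the extreme cuts are therefore precisely those indexed by subset pairs $(\alpha,\beta)$. An equivalent LP-duality route would apply Farkas' lemma and identify two canonical extreme-ray families of the dual cone, for example $y_t=-\mathbbm{1}_\beta(t)$, $\lambda^i=\mathbbm{1}_\alpha(i)$, $\delta^i_t=\mathbbm{1}_{\alpha^c}(i)\mathbbm{1}_\beta(t)$ (with $\lambda^i$ the dual multiplier on the $\bar E^i$ constraint) together with its mirror using $\underline E^i$, which evaluate to \eqref{eq:FullHetero} directly.
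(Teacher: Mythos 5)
Your proof is correct, but it takes a genuinely different route from the paper's. The paper reformulates membership $u\in\mathcal{P}$ as the non-emptiness of the set $\mathcal{M}(u)$ of charging matrices with prescribed row-sum intervals $[\underline{E}^i,\bar{E}^i]$, column sums $u_t$, entry bounds $[0,p^i]$ on the free positions $\mathbb{A}^i$, and forced zeros elsewhere, and then invokes the matrix-feasibility theorem of Hershkowitz, Hoffman and Schneider as a black box; inequality~(\ref{eq:FullHetero}) drops out of a direct evaluation of the summation in their condition. You instead prove necessity by the elementary splitting of $\sum_{t\in\beta}u_t$ over $\alpha$ and $\alpha^c$ (which checks out: the bounds $\sum_{t\in\beta}u^i_t\le\bar{E}^i$ and $\sum_{t\in\beta}u^i_t\le|\beta\cap\mathbb{A}^i|p^i$ both use $u^i_t\ge 0$, and the mirror computation for $\beta^c$ is likewise sound), and sufficiency via Hoffman's circulation theorem on the four-layer network. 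I verified that the cut conditions do collapse as you claim: a vertex set $S$ is determined by its load part $A$, its time part $B$, and the two membership bits for $s$ and $T$; the cuts with $s\in S$, $T\notin S$ (and the equivalent $s,T\in S$) reduce to the first clause with $\alpha=A^c$, $\beta=B^c$, while those with $T\in S$, $s\notin S$ (and $s,T\notin S$) reduce to the second clause, so every Hoffman inequality is one of the two clauses of~(\ref{eq:FullHetero}) and vice versa. Your closing appeal to total unimodularity is not really needed for this step --- the collapse is just bookkeeping over the finitely many cuts --- but it is the right reason why fractional feasibility suffices. What your route buys is self-containedness and a transparent explanation of why exactly the subset pairs $(\alpha,\beta)$ index the facets; what the paper's route buys is brevity and immediate access to the integer-valued extension noted in its Remark 1. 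The two are of course cousins: the cited matrix-feasibility theorem is itself a flow-type result.
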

\begin{proof}
We first interpret the characterization of $\mathcal{P}$ as a matrix
feasibility problem. By definition, if a generation profile $u$ is
adequate, then there exists a decomposition $u=\sum_{i=1}^{N}u^{i}$
such that $u^{i}$ completes the $i^{\mbox{th}}$ PEV's charging task.
This is equivalent to the existence of a $N\times m$ matrix $M$,
the $i^{\mbox{th}}$ row of which is an admissible power profile of
the $i^{\mbox{th}}$ PEV. The matrix $M$ will be referred to as the
charging matrix. Given $u\in\mathcal{P}$, let $\mathcal{M}(u)$ denote
the set of all such charging matrices. These matrices have special
structures: the columns indexed by $\mathbb{A}^{i}$ in the $i^{\mbox{th}}$
row are the free positions which can be filled with a real number
in $[0,p^{i}]$, while the rest of the positions in this row are forbidden
positions that can only be filled with $0$'s. Moreover, $\forall M\in\mathcal{M}(u)$,
it has the $t^{\mbox{th}}$ column sum $u_{t}$ and the $i^{\mbox{th}}$
row sum in the interval $[\underline{E}^{i},\bar{E}^{i}]$. Clearly,
the non-emptiness of the set gives the condition for being an adequate
generation profiles, i.e, 
\[
\mathcal{P}=\{u\in\mathbb{R}^{m}:\ \mathcal{M}(u)\neq\emptyset\}.
\]

Furthermore, the condition for $\mathcal{M}(u)\neq\emptyset$ can
be derived by applying~\cite[Theorem 2.7]{Hershkowitz1997} to the
matrix case. By the definition in~\cite{Hershkowitz1997}, partitions
$\{r_{1},r_{2},\cdots,r_{N}\}$ and $\{c_{1},c_{2},\cdots,c_{m}\}$
of a sequence are said to be orthogonal, if $\left|r_{i}\cap c_{t}\right|\leq1$,
$\forall i\in\mathcal{N}$ and $\forall t\in\mathrm{\mathbb{T}}$.
For a matrix, clearly the rows and columns constitute such orthogonal
partitions. This is the only condition required by~\cite[Theorem 2.7]{Hershkowitz1997}.
Then by a direct calculation of the summation on the right hand side
of~\cite[(2.8)]{Hershkowitz1997}, we can obtain~(\ref{eq:FullHetero}).
This completes the proof.
\end{proof}
\begin{rem}
In general, there are $2^{mN+1}$ inequalities in~(\ref{eq:FullHetero}),
which will be intractable if $N$ is several thousand. When the PEVs
are fully homogeneous, i.e., they share the same set of parameters
$\Omega$, the above result reduces to the well-known majorization
condition \cite{Hao2014,MarshallOlkinArnold2010}, which consists
of only $N+m$ inequalities. In the linear algebra literature, studies
on the matrix feasibility problem are also focused on finding tractable
conditions under limited heterogeneities in the parameters. Adapted
to the PEV charging scenario, if the arrival time $a$ and the charging
rate $p$ are homogeneous, $\underline{E}^{i}=\bar{E}^{i}$, and under
certain monotonicity condition on $u$, the number of inequalities
in~(\ref{eq:FullHetero}) can be significantly reduced \cite{Brualdi2003,Chen2015}.
The author in~\cite{Chen1992} obtained a simple majorization type
result under a special monotonicity condition on $u$. Under this
condition, the charging rate $p$ can be relaxed to be heterogeneous
both among different PEVs and at different time instances. Note that
Theorem~\ref{thm:iff} also applies to the case where the charging
rate takes integer values (see~\cite[Remark (2.19)]{Hershkowitz1997}).
\end{rem}
Since the condition~(\ref{eq:FullHetero}) is very difficult to check
in practice, the goal of this paper is to find a sufficient approximation
of it using much fewer inequalities that does not depend on $N$.
The direct approximation from~(\ref{eq:FullHetero}) could be difficult.
However, it is possible to start with the definition~(\ref{eq:P_AF}).
An interesting perspective is to view the Minkowski sum as a projection
operation. Clearly, from~(\ref{eq:P_AF}), we see that $\mathcal{P}$
is the projection of a higher dimensional polytope $\tilde{\mathcal{P}}$
onto the $u$ subspace, i.e. 
\begin{equation}
\mathcal{P}=\mbox{Proj}_{u}(\tilde{\mathcal{P}}),\label{eq:prj}
\end{equation}
where 
\begin{align}
\tilde{\mathcal{P}}:= & \left\{ (u,\bar{u}):u=\sum_{i=1}^{N}u^{i},~u^{i}\in\mathcal{P}^{i}\right\} ,\label{eq:Ptilde}
\end{align}
$\bar{u}:=(u^{1},u^{2},\cdots,u^{N}),$ and $\mbox{Proj}_{u}$ is
the projection onto the $u$ subspace. In fact, the Minkowski sum
of two sets is often calculated via projection. Note that the number
of the facets of the polytope $\tilde{\mathcal{P}}$ is of $O(mN)$,
as compared to $O(2^{mN})$ of its projection. The relation (\ref{eq:prj})
inspires us to approximate $\mathcal{P}$ based on only the expression
of $\tilde{\mathcal{P}}$. The specific approximation method will
be described in the next section.

\section{Sufficient Approximation of the Aggregate Flexibility}

We will first present our method in a general setting of computing
the maximum homothet of a polytope included in a polytopic projection,
and then apply it to the PEV charging scenario. Our approximation
method is inspired by~\cite{ZhenHertog2015}, where the ellipsoidal
approximation of a polytopic projection is addressed resorting to
the robust optimization technique~\cite{Ben-TalGhaouiNemirovski2009}. 

\subsection{Approximation of the Polytopic Projection }

Given full dimensional polytopes 
\begin{align*}
\mathbb{B}:= & \{u\subset\mathbb{R}^{m}:Fu\leq H\},\\
\tilde{\mathcal{P}}:= & \{(u,\bar{u})\subset\mathbb{R}^{m+\bar{m}}:B\left[\begin{array}{c}
u\\
\bar{u}
\end{array}\right]\leq c\},
\end{align*}
we want to find its maximum homothet of $\mathbb{B}$ contained in
the projection of $\tilde{\mathcal{P}}$ onto the $u$ subspace. It
can be formulated as the following optimization problem
\begin{equation}
\begin{array}{ll}
\underset{\lambda>0,\mu}{\mbox{maximize}} & \lambda\\
\mbox{subject to:} & \lambda\mathbb{B}+\mu\subset\mbox{Proj}_{u}(\tilde{\mathcal{P}}).
\end{array}\label{eq:OPP}
\end{equation}

To facilitate the later formulation of the optimization problems as
linear programs, we perform a change of variables $s=1/\lambda$ and
$r=-\mu/\lambda$. Thus Problem (\ref{eq:OPP}) is equivalent to finding
the minimum homothet of $\mbox{Proj}_{u}(\tilde{\mathcal{P}})$ that
contains $\mathbb{B}$, i.e.
\begin{equation}
\begin{array}{ll}
\underset{s>0,r}{\mbox{minimize}} & s\\
\mbox{subject to:} & \mathbb{B}\subset s\mbox{Proj}_{u}(\tilde{\mathcal{P}})+r.
\end{array}\label{eq:OPP1}
\end{equation}

Furthermore, since orthogonal projection is a linear operation, we
have 
\[
\mbox{Proj}_{u}(s\tilde{\mathcal{P}}+\tilde{r})=s\mbox{Proj}_{u}(\tilde{\mathcal{P}})+r,
\]
where $\tilde{r}=L(r;\mathbf{0}):=\left[\begin{array}{c}
r\\
\mathbf{0}
\end{array}\right]$ is the lift of the vector $r$ in $\mathbb{R}^{m+\bar{m}}$ by setting
the additional dimensions to $\mathbf{0}$. Hence, $L(\mathbb{B};\bar{u}_{0})\subset s\tilde{\mathcal{P}}+\tilde{r}$
for some $\bar{u}_{0}\in\mathbb{R}^{\bar{m}}$ implies the constraint
in (\ref{eq:OPP1}). Therefore, it is sufficient to pose a more restrictive
constraint to obtain a suboptimal solution, and we have
\begin{equation}
\begin{array}{ll}
\underset{s>0,r,\bar{u}_{0}}{\mbox{minimize}} & s\\
\mbox{subject to:} & L(\mathbb{B};\bar{u}_{0})\subset s\tilde{\mathcal{P}}+\tilde{r},
\end{array}\label{eq:OPP2}
\end{equation}
 where the constraint can be expressed as 
\[
\mathbb{B}\subset\left\{ B\left[\begin{array}{c}
u\\
\bar{u}_{0}
\end{array}\right]\leq sc+B\tilde{r}\right\} .
\]

By applying the Farkas's Lemma (see the Appendix \ref{subsec:Farkaslemma}),
the above optimization problem can be transformed into the following
linear programming problem,
\begin{equation}
\begin{array}{cl}
\underset{s>0,G\geq0,r,\bar{u}_{0}}{\mbox{minimize}} & s\\
\mbox{subject to:} & GF=\left[\begin{array}{c}
B_{11}\\
B_{21}
\end{array}\right],\\
 & GH\leq B\left[\begin{array}{c}
r\\
-\bar{u}_{0}
\end{array}\right]+sc.
\end{array}\label{eq:OPP3}
\end{equation}

Before proceed, we illustrate the formulation (\ref{eq:OPP2}) using
a simple example borrowed from~\cite{ZhenHertog2015}. 
\begin{example}
\label{exa:test}Let $\tilde{\mathcal{P}}$ be given by $\tilde{\mathcal{P}}=\{(x,y)|-0.5x-y\leq-9,0.6x+y\leq10,-x-y\leq10\}$,
and $\mathbb{B}=\{x|-0.5\leq x\leq1\}$. The polytope $\tilde{\mathcal{P}}$
is plotted in Fig. \ref{fig:example}. We solve the problem (\ref{eq:OPP3})
to find a sufficient approximation of $\mbox{Proj}_{x}(\tilde{\mathcal{P}})$,
and obtain $s=1.125$, $r=-2.75$, $\bar{u}_{0}=9$. The corresponding
scale factor is $\lambda=1/s=8/9$, and translate factor is $\mu=22/9$.
From these data we can have that $L(\lambda\mathbb{B};8)+[\mu,0]^{T}\subset\tilde{\mathcal{P}}$.
This corresponds to the fact that the longest horizontal line segment
that is contained in $\tilde{\mathcal{P}}$ is at $y=8$.
\end{example}
\begin{center}
\begin{figure}
\begin{centering}
\includegraphics[clip,width=0.9\linewidth]{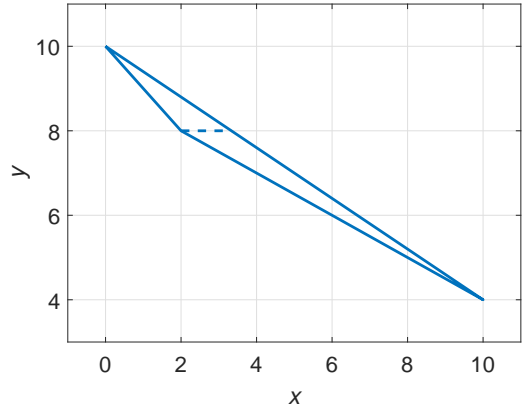}
\par\end{centering}
\caption{\label{fig:example}The approximation of the projection of $\tilde{\mathcal{P}}$
via Problem (\ref{eq:OPP2}).}
\end{figure}
\par\end{center}

Clearly, the formulation of Problem~(\ref{eq:OPP2}) is very conservative.
It actually requires the homothet of the polytope $\mathbb{B}$ be
entirely contained in $\tilde{\mathcal{P}}$. It amounts to each time
fixing $\bar{u}=\bar{u}_{0}$, and then measuring the cross section
of $\tilde{\mathcal{P}}$. However, for approximating the projection
of $\tilde{\mathcal{P}}$, we only need 
\begin{equation}
\forall u\in\lambda\mathbb{B}+\mu,\ \exists\bar{u}(u),\mbox{ such that }(u,\bar{u}(u))\in\tilde{\mathcal{P}},\label{eq:Dependence}
\end{equation}
where $\bar{u}$ is a function of $u$, while in Problem (\ref{eq:OPP2})
$u$ is determined by $\bar{u}$. The relation (\ref{eq:Dependence})
can be interpreted in the context of the adjustable robust optimization
problem~\cite{Ben-TalGoryashkoGuslitzerEtAl2003}, where $u$ is
the so called non-adjustable variable, and $\bar{u}$ is the adjustable
variable. The function $\bar{u}(u)$ is called the decision rule.
Solving~(\ref{eq:OPP2}) over all possible choices of $\bar{u}(u)$
is intractable. An efficient way to overcome this is to restrict the
choice of $\bar{u}(u)$ to be the affine decision rules,
\begin{equation}
\bar{u}=Wu+V,\label{eq:Decision}
\end{equation}
where $W\in\mathbb{R}^{\tilde{m}\times m}$, and $V\in\mathbb{R}^{\tilde{m}}$.
Using (\ref{eq:Decision}) in Problem (\ref{eq:OPP2}) and by some
manipulation, we can obtain
\begin{equation}
\begin{array}{ll}
\underset{s>0,r,W,V}{\mbox{minimize}} & s\\
\mbox{subject to:} & \mathbb{B}\subset\left\{ B\left[\begin{array}{c}
I\\
W
\end{array}\right]u\leq B\left[\begin{array}{c}
r\\
-V
\end{array}\right]+sc\right\} .
\end{array}\label{eq:OPPAPP}
\end{equation}

Using the Farkas's Lemma, Problem (\ref{eq:OPPAPP}) can be solved
by the following linear programming

\[
(\mbox{APP})\qquad\begin{array}{ll}
\underset{s>0,G\geq0,r,W,V}{\mbox{minimize}} & s\\
\mbox{subject to:} & GF=B\left[\begin{array}{c}
I\\
W
\end{array}\right],\\
 & GH\leq B\left[\begin{array}{c}
r\\
-V
\end{array}\right]+sc.
\end{array}
\]

We test the above formulation by computing the approximation of the
polytopic projection in Example \ref{exa:test}.
\begin{example}[Continued]
 We solve the Problem (APP) and obtain $r=-0.5$, $s=0.15$. This
corresponds to the interval $1/0.15\left(\mathbb{B}+0.5\right)=[0,10]$,
which is exactly $\mbox{Proj}_{x}(\tilde{\mathcal{P}})$.
\end{example}
In general, the Problem (APP) gives a suboptimal solution for the
approximation of $\mbox{Proj}_{x}(\tilde{\mathcal{P}})$ with respective
to $\mathbb{B}$. A possible way to reduce the conservativeness is
to employ the quadratic decision rule or other nonlinear decision
rules as reported in~\cite{Ben-TalGoryashkoGuslitzerEtAl2003}.

\subsection{Aggregation of the PEVs' Flexibility}

In this subsection, the polytopic projection approximation developed
in the above section will be employed to aggregate the PEVs' flexibility.
We will discuss several issues including the choice of the nominal
model $\mathbb{B}$, the preprocessing of the charging constraints,
and the strategy for parallel computation. Finally, the explicit formulae
for the flexibility model and the corresponding scheduling policy
are derived.

\subsubsection{Choice of the Nominal Model}

Intuitively, one can choose the nominal polytope $\mathbb{B}$ to
be of the similar form of~(\ref{eq:pi}), and the parameters can
be taken as the mean values of the PEV group. More generally, we can
define the virtual battery model as follows.
\begin{defn}
The set $\mathbb{B}(\phi)$ is called a $m$-horizon discrete time
virtual battery with parameters $\phi:=\{\underline{p},\bar{p},\underline{E},\bar{E}\}$,
if 
\[
\mathbb{B}(\phi):=\{u\in\mathbb{R}^{m}:\underline{p}\leq u\leq\bar{p}\mbox{ and }{\bf 1}_{m}^{T}u\in[\underline{E},\bar{E}]\}.
\]
$\mathbb{B}(\phi)$ is called a sufficient battery if $\mathbb{B}(\phi)\subset\mathcal{P}$.
\end{defn}
Conceptually, the virtual battery model mimics the charging/discharging
dynamics of a battery. We can regard $u$ as the power draw of the
battery,  $\underline{p}$ and $\bar{p}$ as its discharging/charging
power limits, and $\bar{E}$ and $\underline{E}$ as the energy capacity
limits. Geometrically, it is a polytope in $\mathbb{R}^{m}$ with
$2m+2$ facets, which is computationally very efficient when posed
as the constraint in various optimization problems. 

\subsubsection{Preprocessing the Charging Constraints}

Note that the original high-dimensional polytope $\tilde{\mathcal{P}}$
defined in~(\ref{eq:Ptilde}) contains equality constraints, which
is not full dimensional. Therefore, first we have to remove the equalities
by substituting the variables. For simplicity, assume that $\bar{E}^{i}>\underline{E}^{i}$.
More explicitly, let $u_{t}^{i}$ be the $i^{\mbox{th}}$ PEV's charging
profile at time $t$, $\forall i\in\mathcal{N}$, and $\forall t\in\mathbb{T}$,
$u_{t}$ be the generation profile at time $t$. The overall charging
constraints of the PEV group can be written as follows
\begin{equation}
\begin{cases}
u_{t}=\sum_{i=1}^{N}\mathbbm{1}_{\mathbb{A}^{i}}(t)u_{t}^{i}, & \forall t\in\mathbb{T},\\
0\leq u_{t}^{i}\leq p^{i}, & \forall i\in\mathcal{N},\mbox{ and }\forall t\in\mathbb{A}^{i},\\
u_{t}^{i}=0, & \forall i\in\mathcal{N},\mbox{ and }\forall t\in\mathbb{T}\backslash\mathbb{A}^{i},\\
\underline{E}^{i}\leq\sum_{t=a^{i}}^{d^{i}}u_{t}^{i}\leq\bar{E}^{i}, & \forall i\in\mathcal{N},
\end{cases}\label{eq:original}
\end{equation}
which is a polytope in $\mathbb{R}^{(N+1)m}$, and the coordinate
is designated to be $(u,u^{1},u^{2},\cdots,u^{N})$. We first need
to eliminate the equality constraints containing $(u^{1},u^{2},\cdots,u^{N})$,
i.e., the first line of~(\ref{eq:original}). To standardize the
elimination process, define $N_{t}:=\{i\in\mathcal{N}|t\in\mathbb{A}^{i}\}$,
which is the index set of the PEVs that can be charged at time $t$.
Without loss of generality, assume that $N_{t}\neq\emptyset$, $\forall t\in\mathbb{T}$,
and we substitute $u_{t}^{j_{t}}=u_{t}-\sum_{i\in N_{t}\backslash j_{t}}u_{t}^{i}$,
$\forall t\in\mathbb{T}$ in~(\ref{eq:original}), where $j_{t}:=\min_{i\in N_{t}}i$
is the first PEV in the set $N_{t}$. Let $S_{i}:=\{t\in\mathbb{A}^{i}:j_{t}=i\}$
be the set of time instants at which the substitution of $u_{t}^{i}$
is made. Further, we remove the coordinate $u_{t}^{i}=0$ and obtain,
\begin{equation}
\begin{cases}
0\leq u_{t}-\sum_{i\in N_{t}\backslash\{j_{t}\}}u_{t}^{i}\leq p^{j_{t}}, & \forall t\in\mathbb{T},\\
0\leq u_{t}^{i}\leq p^{i}, & \forall t\in\mathbb{T},i\in N_{t}\backslash\{j_{t}\},\\
\underline{E}^{i}\leq\sum_{t\in\mathbb{A}^{i}\backslash S_{i}}u_{t}^{i}\\
+\sum_{t\in S_{i}}\left\{ u_{t}-\sum_{i\in N_{t}\backslash j_{t}}u_{t}^{i}\right\} \leq\bar{E}^{i}, & \forall i\in\mathcal{N},
\end{cases}\label{eq:eliminated}
\end{equation}
where the new coordinate becomes $(u,\tilde{u}),$ with 
\begin{equation}
\tilde{u}:=(\tilde{u}^{1},\tilde{u}^{2},\cdots,\tilde{u}^{N})\label{eq:lowerDimofP}
\end{equation}
and $\tilde{u}^{i}=[u_{t}^{i}]_{t\in\mathbb{A}^{i}\backslash S_{i}}$,
$i\in\mathcal{N}$. Clearly, $\tilde{u}$ has a dimension $\tilde{m}:=\sum_{i=1}^{N}(d^{i}-a^{i}+1)-m$,
and there are a number of $n:=2(N+\sum_{i=1}^{N}(d^{i}-a^{i}+1))$
linear inequalities in (\ref{eq:eliminated}).  We denote it by the
matrix form, 
\begin{equation}
B\left[\begin{array}{c}
u\\
\tilde{u}
\end{array}\right]\leq c,\label{eq:Bm}
\end{equation}
where note that $B\in\mathbb{R}^{n\times(m+\tilde{m})}$ is a sparse
matrix and has the structure, 
\[
B=\left[\begin{array}{cc}
B_{11} & B_{12}\\
0_{2\tilde{m}\times m} & B_{22}
\end{array}\right].
\]

\subsubsection{Scalability}

For a fixed time horizon $m$, both the number of the decision variables
and the number of inequality constraints of Problem~(APP) increase
linearly with $\tilde{m}$. When the number of PEVs to be aggregated
is too large, solving Problem~(APP) would be intractable. To address
the increasing numerical complexity, we propose to divide the PEVs
into small groups, and solve Problem~(APP) for each group with respect
to the same nominal model $\mathbb{B}$. Denoting the solutions of
Problem~(APP) for the $k^{\mbox{th}}$ group by $(s_{k},r_{k},W_{k},V_{k},G_{k})$,
the aggregate flexibility of the $k^{\mbox{th}}$ group is given by
$1/s_{k}(\mathbb{B}-r_{k})$. Then the flexibility of the overall
PEV group can be calculated directly based on the following lemma.
\begin{lem}
\label{lem:MkskSumScale}Let $\mathbb{B}$ be a polytope, and $\lambda_{1}$,
$\lambda_{2}$ be non-negative scalars. Then $\lambda_{1}\mathbb{B}\oplus\lambda_{2}\mathbb{B}=(\lambda_{1}+\lambda_{2})\mathbb{B}$.
\end{lem}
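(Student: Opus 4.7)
The plan is to prove the identity by set inclusion in both directions, using only the fact that a polytope is a convex set. One direction is essentially a definitional rewriting, while the other requires convexity in an essential way.

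For the easy inclusion $(\lambda_1+\lambda_2)\mathbb{B}\subseteq \lambda_1\mathbb{B}\oplus\lambda_2\mathbb{B}$, I would take an arbitrary $x=(\lambda_1+\lambda_2)b$ with $b\in\mathbb{B}$ and simply split it as $x=\lambda_1 b+\lambda_2 b$, exhibiting $x$ as an element of the Minkowski sum. This holds for any set and does not invoke convexity.

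The substantive direction is $\lambda_1\mathbb{B}\oplus\lambda_2\mathbb{B}\subseteq(\lambda_1+\lambda_2)\mathbb{B}$. Here I would take $x=\lambda_1 b_1+\lambda_2 b_2$ with $b_1,b_2\in\mathbb{B}$ and write
\[
x=(\lambda_1+\lambda_2)\left(\frac{\lambda_1}{\lambda_1+\lambda_2}\,b_1+\frac{\lambda_2}{\lambda_1+\lambda_2}\,b_2\right)
\]
whenever $\lambda_1+\lambda_2>0$. The point in parentheses is a convex combination of $b_1$ and $b_2$, hence lies in $\mathbb{B}$ because $\mathbb{B}$ is a polytope and therefore convex. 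This places $x$ in $(\lambda_1+\lambda_2)\mathbb{B}$.

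Finally I would dispose of the degenerate case $\lambda_1=\lambda_2=0$ separately, observing that both sides reduce to $\{\mathbf{0}\}$. The only real obstacle is recognizing that convexity of $\mathbb{B}$ is exactly what powers the nontrivial inclusion; without it only the superset relation would hold in general, as is classical in convex geometry.
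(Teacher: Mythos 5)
Your proof is correct and is exactly the standard convexity argument: the paper itself omits a proof of this lemma (remarking only that it "can be easily verified" and citing Schneider's \emph{Convex Bodies}, Remark 1.1.1, for the general convex-body case), and the argument you give—trivial inclusion one way, convex combination $\frac{\lambda_1}{\lambda_1+\lambda_2}b_1+\frac{\lambda_2}{\lambda_1+\lambda_2}b_2\in\mathbb{B}$ for the other, with the degenerate case $\lambda_1=\lambda_2=0$ handled separately—is precisely the intended one. You also correctly identify that convexity is the essential hypothesis, since for non-convex sets only $(\lambda_1+\lambda_2)\mathbb{B}\subseteq\lambda_1\mathbb{B}\oplus\lambda_2\mathbb{B}$ holds in general.
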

The above result can be easily verified. A more general proof for
the convex body can be found in~\cite[Remark 1.1.1]{Schneider1993}).

By this ``divide and conquer'' strategy, the original highly complex
optimization problem can be solved very efficiently in a parallel
fashion. However, this increases the conservativeness of the approximation,
which is a result of the trade-off between the tractability and the
optimality.

In case that different nominal models $\mathbb{B}^{k}$ are used for
each group, we can perform the aggregation again over the obtained
groups. Repeating this after several stages, we can arrive one virtual
battery model for the overall PEV group. Even though we have to spread
the computation over time in different stages, in practice this process
terminates soon since the number of stages is of order $\log_{x}^{N}$
when $x$ PEVs/groups are processed at each run of Problem~(APP).

\subsubsection{Design of the Sufficient Virtual Battery}

Combining the above development, the following explicit formulae for
designing the sufficient virtual battery can be derived. The scheduling
policy for each individual PEVs can also be obtained. Without loss
of generality, these results are stated for the case where only one
stage aggregation is executed. The formulae for multi-stage aggregation
can be obtained analogously. For convenience, let us denote the solutions
of the Problem (APP) by the output of the function $(s_{k},r_{k},W_{k},V_{k},G_{k})=\mbox{APP}(\mathcal{\tilde{P}}^{k},\mathbb{B}),$
where $\mathcal{\tilde{P}}^{k}$ is the high-dimensional polytope
associated with the $k^{\mbox{th}}$ group of PEVs, and $\mathbb{B}$
is a given nominal model parameterized by $(\bar{p},\underline{p},\bar{E},\underline{E})$.
The proof of the following theorem can be found in the Appendix \ref{subsec:Proof-of-Theorem}. 
\begin{thm}
\label{thm:SuffVB}Suppose $(s_{k},r_{k},W_{k},V_{k},G_{k})=\mbox{APP}(\tilde{\mathcal{P}}^{k},\mathbb{B})$,
$\forall k$. Then $\mathbb{B}_{s}=\lambda\mathbb{B}+\mu$ is a sufficient
battery parameterized by\begin{subequations}
\begin{alignat*}{1}
\bar{u}_{s}= & \lambda\bar{p}+\mu,\\
\underline{u}_{s}= & \lambda\underline{p}+\mu,\\
\bar{E}_{s}= & \lambda\bar{E}+{\bf 1}_{m}^{T}\mu,\\
\underline{E}_{s}= & \lambda\underline{E}+{\bf 1}_{m}^{T}\mu,
\end{alignat*}
\end{subequations}where $\lambda=\sum_{k}\lambda_{k}$, $\mu=\sum_{k}\mu_{k}$
and $\lambda_{k}=\frac{1}{s_{k}}$ , $\mu_{k}=\frac{-r_{k}}{s_{k}}$.
Furthermore, $\forall u\in\mathbb{B}_{s},$ the scheduling policy
is given by 
\begin{equation}
\tilde{u}_{k}=W_{k}\frac{\lambda_{k}}{\lambda}(u-\mu)+\lambda_{k}V_{k},\label{eq:SchdPlcy}
\end{equation}
where $\tilde{u}_{k}=(\tilde{u}_{k}^{1},\tilde{u}_{k}^{2},\cdots,\tilde{u}_{k}^{N_{k}})$
denotes the charging profiles of the number of $N_{k}$ PEVs in the
$k^{\mbox{th}}$ group.
\end{thm}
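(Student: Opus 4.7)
The plan is to translate the solutions of each local Problem (APP) back into a homothet containment, combine these containments via Minkowski summation using Lemma \ref{lem:MkskSumScale}, and finally expose the underlying decomposition to read off the scheduling policy. The bookkeeping between the variables $(s_k,r_k)$ used in the linear program and the homothet parameters $(\lambda_k,\mu_k)$ will require some care, but it is essentially mechanical once the inverse change of variables $s_k=1/\lambda_k,\ r_k=-\mu_k/\lambda_k$ is inverted.

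First I would argue that for each group $k$, the tuple $(s_k,r_k,W_k,V_k)$ feasible to (APP) provides the inclusion $\lambda_k\mathbb{B}+\mu_k\subset\mathcal{P}^{k}$, where $\mathcal{P}^k=\mbox{Proj}_u(\tilde{\mathcal{P}}^k)$ is the exact aggregate flexibility of group $k$. Concretely, the (APP) constraint, after the Farkas certification is absorbed back into its geometric form (\ref{eq:OPPAPP}), means that for every $u\in\mathbb{B}$ the pair $(u,W_ku+V_k)$ lies in $s_k\tilde{\mathcal{P}}^k+\tilde{r}_k$. Dividing by $s_k$ and using $\lambda_k=1/s_k$, $\mu_k=-r_k/s_k$, this is equivalent to saying that the point $u^{(k)}:=\lambda_ku+\mu_k\in\lambda_k\mathbb{B}+\mu_k$ admits the admissible lift $\tilde{u}^{(k)}:=\lambda_k(W_ku+V_k)=W_k(u^{(k)}-\mu_k)+\lambda_kV_k$ so that $(u^{(k)},\tilde{u}^{(k)})\in\tilde{\mathcal{P}}^k$. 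This gives both the required set inclusion and an explicit affine selector for the projection on each group.

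Second, Lemma \ref{lem:MkskSumScale} together with the translation identity $(A+a)\oplus(B+b)=(A\oplus B)+(a+b)$ yields
\begin{equation*}
\biguplus_{k}\bigl(\lambda_k\mathbb{B}+\mu_k\bigr)=\Bigl(\sum_k\lambda_k\Bigr)\mathbb{B}+\sum_k\mu_k=\lambda\mathbb{B}+\mu=\mathbb{B}_s.
\end{equation*}
Since Minkowski sum is monotone with respect to inclusion and $\mathcal{P}=\biguplus_k\mathcal{P}^k$, the inclusions from the first step combine to give $\mathbb{B}_s\subset\mathcal{P}$, so $\mathbb{B}_s$ is a sufficient battery. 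The explicit parameters $(\bar{u}_s,\underline{u}_s,\bar{E}_s,\underline{E}_s)$ then follow by a direct substitution: $u\in\mathbb{B}_s$ if and only if $(u-\mu)/\lambda\in\mathbb{B}$, and rewriting the four defining inequalities of the nominal battery in terms of $u$ gives the stated closed-form expressions.

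Finally, for the scheduling policy, given $u\in\mathbb{B}_s$ I would propose the decomposition $u^{(k)}:=\frac{\lambda_k}{\lambda}(u-\mu)+\mu_k$. A quick check shows $\sum_k u^{(k)}=u$, and with $v=(u-\mu)/\lambda\in\mathbb{B}$ we have $u^{(k)}=\lambda_kv+\mu_k\in\lambda_k\mathbb{B}+\mu_k$. Applying the affine selector from the first paragraph to $u^{(k)}$ then yields $\tilde{u}_k=W_k(u^{(k)}-\mu_k)+\lambda_kV_k=W_k\frac{\lambda_k}{\lambda}(u-\mu)+\lambda_kV_k$, which is exactly (\ref{eq:SchdPlcy}); the per-PEV profiles inside group $k$ are then recovered from $\tilde{u}_k$ by inverting the substitution used in the preprocessing step (\ref{eq:eliminated})-(\ref{eq:lowerDimofP}). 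The main obstacle, as anticipated, is simply keeping the two coordinate changes — the $(s,r)\leftrightarrow(\lambda,\mu)$ reparametrization and the elimination of the equality constraints — aligned so that the affine decision rule indeed produces profiles feasible for (\ref{eq:original}); no nontrivial estimate is needed beyond that.
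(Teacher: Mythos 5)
Your proposal is correct and follows essentially the same route as the paper's proof: per-group homothet containment from the (APP) solution, combination via Lemma~\ref{lem:MkskSumScale}, the proportional decomposition $z_k=\frac{\lambda_k}{\lambda}(u-\mu)+\mu_k$, and the affine decision rule to recover the individual profiles. The only difference is cosmetic — you obtain the translate of the per-group selector by directly unwinding the $(s_k,r_k)\leftrightarrow(\lambda_k,\mu_k)$ change of variables, whereas the paper solves for it via a commutative-diagram identity, yielding the same $U=\lambda_kV_k-W_k\mu_k$.
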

\begin{center}
\begin{figure}[t]
\begin{centering}
\includegraphics[clip,width=0.9\linewidth]{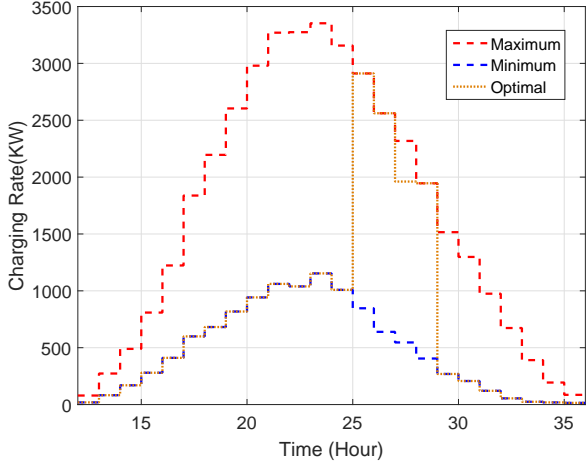}
\par\end{centering}
\caption{\label{fig:bounds}The maximum and the minimum charging rates of the
designed virtual battery model.}
\end{figure}
\par\end{center}

\begin{center}
\begin{figure}[t]
\begin{centering}
\includegraphics[clip,width=1\linewidth]{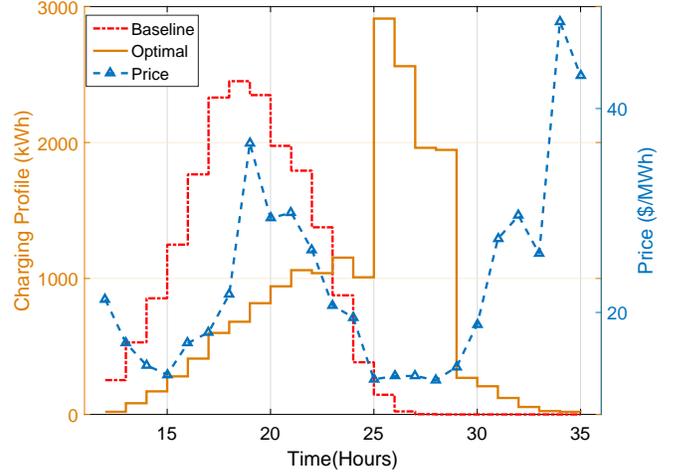}
\par\end{centering}
\caption{\label{fig:planing}Aggregate charging profile versus the price.}
\end{figure}
\par\end{center}

\section{Simulation}

In this section, we consider coordination of a group of 1000 PEVs
for energy arbitrage. The considered time horizon is 24 hours, and
the price is taken as the Day-Ahead Energy Market locational marginal
pricing (LMP)~\cite{PJM}. The parameters of PEVs are randomly generated
by their types and the corresponding probability distributions (see~\cite{GuoZhangYanEtAl2013}
for more details). Since most of the PEVs arrive during the afternoon
to midnight and leave during the next 12 hours, we choose to simulate
from 12:00 noon to the same time on the next day. In addition, we
assume a $\pm5\%$ total charging energy flexibility around the nominal
energy requirement. The (APP) problem is solved using the GLPK linear
programming solver~\cite{glpk2006} interfaced with YALMIP~\cite{Lofberg2004}.
At the first stage, we randomly divide the 1000 PEVs into 100 groups,
where each group contains 10 PEVs. The aggregate flexibility approximation
is thus solved for 10 PEVs in one group. This number is chosen according
to the numerical efficiency of the solver. The parameters of the nominal
battery model $\mathbb{B}$ for each group are chosen as the average
values of the group. For those groups having the same minimum arrival
time and maximum departure time, we take the average values between
them and approximate their flexibilities using the same nominal model.
For example, by setting the elements in the charging matrix $M$ by
their maximum values, the upper charging limits $\bar{p}$ for the
nominal battery model are calculated as the column averages of $M$.
Since these groups share the same nominal model, their approximated
aggregated flexibilities can be calculated very easily based on Lemma~\ref{lem:MkskSumScale}.
In our simulation, after the first stage, the 100 groups are merged
into 22 collections which are represented by polytopes of different
codimensions. Then we repeat the process in the first stage to approximate
the flexibility of these 22 collections, where each time we aggregate
11 collections. Finally a sufficient battery model $\mathbb{B}_{s}$
is obtained for the characterization of the aggregate flexibility
of the entire loads group.

The dynamic charging limits of the obtained battery model are illustrated
in Fig.~\ref{fig:bounds}. The total charging energy bounds are obtained
as $\underline{E}=18.35$ MWh and $\bar{E}=19.09$ MWh, which lies
in the true aggregate energy consumption interval $l_{E}:=[17.65,19.51]$
MWh. From Fig.~\ref{fig:bounds}, we can see that around the midnight
(the $24^{\mbox{th}}$ hour), the charging flexibility of the PEVs
are the largest in terms of the difference of the charging rate bounds.
Denoting the energy price by $\pi$, and the planned energy by $z$,
the energy arbitrage problem can be formulated as a linear programming
problem as follows,
\[
\begin{array}{ll}
\underset{z}{\mbox{minimize}} & \pi^{T}z\\
\mbox{subject to:} & z\in\mathbb{B}_{s}.
\end{array}
\]

Clearly, the above optimization problem can be solved much more efficiently
than directly optimizing the power profiles subject to the constraints
of 1000 PEVs. We plot the obtained power profiles against the price
changes in Fig.~\ref{fig:planing}. It can be observed that most
of the energy demand are consumed during $1:00\mbox{AM}$~$(25^{\mbox{th}}\mbox{ hour})$
to $4\mbox{AM}$~$(28^{\mbox{th}}\mbox{ hour})$ in the morning,
when the prices are at its lowest. The same curve of the planned power
is also plotted in Fig.~\ref{fig:bounds} (the dotted line), where
note we assume that the time discretization unit is $1$ hour. We
see that the planned charging rate lies in the charging bounds of
$\mathbb{B}_{s}$, and almost always matches the maximum/minimum bound.
Using this charging profile, the total energy being charged to the
PEVs is $18.34$ MWh which lies in the interval $l_{E}$. Hence, it
is adequate and the charging requirement of individual PEV can be
guaranteed by using the scheduling policy (\ref{eq:SchdPlcy}).

We choose the immediate charging policy as the baseline and use it
to compare with the obtained optimal charging profile in Fig.~\ref{fig:bounds}.
To ensure a fair comparison, we impose an additional constraint that
the total energies consumed by both profiles are the same. The total
energy cost for the baseline charging profile is $430.8\$$, while
the cost for the optimal charging profile is $343.0\$$, which reduces
the baseline cost by about $20\%$.

\section{Conclusions and Future Work}

This paper proposed a novel polytopic projection approximation method
for extracting the aggregate flexibility of a group of heterogeneous
deferrable loads. The aggregate flexibility of the entire load group
could be extracted parallelly and in multiple stages by solving a
number of linear programming problems. The scheduling policy for individual
load was simultaneously derived from the aggregation process. Finally,
a PEV energy arbitrage problem was solved to demonstrate the effectiveness
of our approach at characterizing the feasible aggregate power flexibility
set, and facilitating finding the optimal power profile.

Our future work includes studying the performances of using other
decision rules such as the quadratic decision rule and the nonlinear
decision rule, and as compared to the method using Zonotopes. In addition,
it is interesting to consider a probabilistic description of the aggregate
flexibility as in practice the uncertainty of the loads parameters
must be considered to reduce the risk of over-estimating or under-estimating
the aggregated flexibility.

\appendix{}

\subsection{Farkas' lemma\label{subsec:Farkaslemma}}

For the sake of completeness, we restate the following version of
Farkas's lemma as used in~\cite{Eaves1982}, which will be used to
derive our algorithm for approximating the polytopic projection. Its
proof can be found in~\cite{Mangasarian2002}.
\begin{lem}
\label{lem:Farkas}(Farkas' lemma) Suppose that the system of inequalities
$Ax\leq b,$ $A\in\mathbb{R}^{m\times n}$ has a solution and that
every solution satisfies $Fx\leq d,$ $F\in\mathbb{R}^{k\times n}$.
Then there exists $G\in\mathbb{R}^{k\times m}$ , $G\geq0$, such
that $GA=F$ and $Gb\leq d$.
\end{lem}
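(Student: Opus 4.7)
The plan is to unwind what Problem (APP) guarantees for each group, rescale back to the original $(\lambda,\mu)$ coordinates, and then assemble the groupwise homothets by Minkowski sum.

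First I would translate the output of $\mathrm{APP}(\tilde{\mathcal P}^k,\mathbb B)$ into the geometry of the original subproblem. Setting $\lambda_k=1/s_k$ and $\mu_k=-r_k/s_k$, the constraint in~(\ref{eq:OPPAPP}) together with the affine decision rule~(\ref{eq:Decision}) is exactly the statement that for every $v\in\mathbb B$ the lifted point $(\lambda_k v+\mu_k,\ \lambda_k(W_k v+V_k))$ lies in $\tilde{\mathcal P}^k$. Projecting onto the aggregate-power coordinate gives $\lambda_k\mathbb B+\mu_k\subset\mathrm{Proj}_{u}(\tilde{\mathcal P}^k)=\mathcal P^k$, i.e.\ a homothet of $\mathbb B$ sits inside the true aggregate flexibility of the $k$th group, with the affine map $u_k\mapsto W_k(u_k-\mu_k)+\lambda_k V_k$ producing a valid decomposition of any $u_k\in\lambda_k\mathbb B+\mu_k$ into individual admissible profiles for the PEVs of that group.

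Second, I would aggregate across groups using Lemma~\ref{lem:MkskSumScale}. The identity $(\lambda_1\mathbb B+\mu_1)\oplus(\lambda_2\mathbb B+\mu_2)=(\lambda_1+\lambda_2)\mathbb B+(\mu_1+\mu_2)$ follows from Lemma~\ref{lem:MkskSumScale} together with the elementary fact that Minkowski sum commutes with translation. Iterating, $\biguplus_k(\lambda_k\mathbb B+\mu_k)=\lambda\mathbb B+\mu=\mathbb B_s$. Since each summand is contained in $\mathcal P^k$ and Minkowski summation is monotone, $\mathbb B_s\subset\biguplus_k\mathcal P^k=\mathcal P$, so $\mathbb B_s$ is a sufficient battery. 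To read off its parameters, I would apply the substitution $w=(u-\mu)/\lambda$ directly to $\mathbb B(\underline p,\bar p,\underline E,\bar E)$: the box constraints $\underline p\le w\le\bar p$ become $\lambda\underline p+\mu\le u\le\lambda\bar p+\mu$, and the energy constraint $\mathbf 1_m^{T}w\in[\underline E,\bar E]$ becomes $\mathbf 1_m^{T}u\in[\lambda\underline E+\mathbf 1_m^{T}\mu,\ \lambda\bar E+\mathbf 1_m^{T}\mu]$, which is exactly the claimed parameterization.

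Finally, to obtain the scheduling policy~(\ref{eq:SchdPlcy}), I would, given $u\in\mathbb B_s$, write $u=\lambda v+\mu$ for the unique $v=(u-\mu)/\lambda\in\mathbb B$, and distribute it as $u=\sum_k u_k$ with $u_k:=\lambda_k v+\mu_k$. Each $u_k$ then lies in $\lambda_k\mathbb B+\mu_k\subset\mathcal P^k$, and applying the groupwise affine decision rule from the first paragraph yields $\tilde u_k=W_k(u_k-\mu_k)+\lambda_k V_k$; substituting $u_k-\mu_k=(\lambda_k/\lambda)(u-\mu)$ gives precisely~(\ref{eq:SchdPlcy}). The main obstacle, and the only step that requires some care, is keeping the two coordinate changes consistent: the one inside $\mathrm{APP}$ that sends $(s,r)\leftrightarrow(\lambda,\mu)$ and the one inside $\mathrm{Proj}_{u}$ that rescales both the $u$- and the $\bar u$-coordinates by $\lambda_k$; this is what produces the factor $\lambda_k/\lambda$ in the decomposition and the coefficient $\lambda_k$ in front of $V_k$, rather than $1$ or $\lambda$, in the final formula.
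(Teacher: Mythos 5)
Your proposal does not prove the statement in question. The statement is Lemma~\ref{lem:Farkas}, the affine (set-containment) form of Farkas' lemma: given that $\{x:Ax\leq b\}$ is nonempty and contained in $\{x:Fx\leq d\}$, one must exhibit a nonnegative matrix $G$ with $GA=F$ and $Gb\leq d$. What you have written is instead a proof of Theorem~\ref{thm:SuffVB} (the sufficient virtual battery and scheduling policy): you unwind Problem~(APP), rescale by $(\lambda_k,\mu_k)$, and assemble homothets via Lemma~\ref{lem:MkskSumScale}. None of the objects $A$, $b$, $F$, $d$, $G$ of the lemma appear in your argument, and nothing you prove implies the existence of the multiplier matrix $G$. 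As it stands the proposal is a correct-looking sketch of the wrong theorem, so it cannot be accepted as a proof of Lemma~\ref{lem:Farkas}.

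For the record, the paper itself does not prove this lemma; it cites Mangasarian for the proof. If you want to supply one, the standard route is linear programming duality applied row by row: for each row $f_i^{T}$ of $F$, the hypothesis says the LP $\max\{f_i^{T}x:Ax\leq b\}$ is feasible and its optimal value is at most $d_i$; strong duality then yields $g_i\geq 0$ with $g_i^{T}A=f_i^{T}$ and $g_i^{T}b\leq d_i$, and stacking the $g_i^{T}$ as the rows of $G$ gives $G\geq 0$, $GA=F$, $Gb\leq d$. The feasibility hypothesis on $Ax\leq b$ is essential here (it rules out the vacuous-containment case where no such $G$ need exist), so any proof you write should use it explicitly.
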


\subsection{\label{subsec:Proof-of-Theorem}Proof of Theorem \ref{thm:SuffVB}}

(1) Sufficient battery: since $(s_{k},r_{k})$ is the solution of
the APP problem, we have $\lambda_{k}\mathbb{B}+\mu_{k}\subset\tilde{\mathcal{P}}^{k}$,
where $\lambda_{k}=1/s_{k}$ and $\mu_{k}=-r_{k}/s_{k}$. Let 
\[
\mathbb{B}_{s}:=\biguplus_{k}(\lambda_{k}\mathbb{B}+\mu_{k}),
\]
and it can be shown that, 
\begin{align*}
\mathbb{B}_{s} & =\biguplus_{k}\lambda_{k}\mathbb{B}+\mu\\
 & =\lambda\mathbb{B}+\mu,
\end{align*}
where $\lambda=\sum_{k}\lambda_{k},\mu=\sum_{k}\mu_{k}$, and the
last equality is derived by using Lemma~\ref{lem:MkskSumScale}.
Since 
\[
\biguplus_{k}(\lambda_{k}\mathbb{B}+\mu_{k})\subset\biguplus_{k}\tilde{\mathcal{P}}^{k}=\tilde{\mathcal{P}},
\]
we see $\mathbb{B}_{s}$ is sufficient. Now suppose $u\in\mathbb{B}_{s}$,
then $(u-\mu)/\lambda\in\mathbb{B}$. Denoting $\mathbb{B}$ by its
facet representation, we have $\mathbb{B}=\{Fu\leq H\}$, where
\begin{align*}
F= & [I_{m},I_{m},{\bf 1}_{m},{\bf 1}_{m}]^{T},\\
H= & (\bar{u},-\underline{u},\bar{E},-\underline{E}),
\end{align*}
and then parameters of $\mathbb{B}_{s}$ can be obtained from 
\[
Fu\leq\lambda H+F\mu.
\]

(2) Scheduling policy: given a generation profile $u$ in $\mathbb{B}_{s}$,
we can decompose it into the individual admissible power profile $\tilde{u}$
through two steps. First, we decompose it into the generation profiles
for each groups: $\forall u\in\mathbb{B}_{s}$, by part (1) we know
\[
\frac{1}{\lambda}(u-\mu)\in\mathbb{B},
\]
and further more 
\[
\frac{\lambda_{k}}{\lambda}(u-\mu)+\mu_{k}\in\tilde{\mathcal{P}}^{k}.
\]

Denoting the generation profile for the $k^{\mbox{th}}$ group by
\[
z_{k}:=\frac{\lambda_{k}}{\lambda}(u-\mu)+\mu_{k},
\]
and hence, $z_{k}\in\lambda_{k}\mathbb{B}+\mu_{k}$ which is the aggregate
flexibility extracted from the $k^{\mbox{th}}$ group. It can be further
decomposed into each PEVs in the $k^{\mbox{th}}$ group. Now we need
to use the linear decision rule in (\ref{eq:Decision}). Note that
the decision rule (\ref{eq:Decision}) is applied in (\ref{eq:OPPAPP})
which actually maps from $\mathbb{B}$ to $s_{k}\tilde{\mathcal{P}}^{k}+(r_{k},\mathbf{0})$,
while the decomposition mapping we need is actually from $\lambda_{k}\mathbb{B}+\mu_{k}$
to $\tilde{\mathcal{P}}^{k}$. The mappings between these four polytopes
form a commutative diagram (see below). Observe that the linear ratio
of the decomposition mapping does not change, and only the translate
vector needs to be calculated. Therefore, assume that the decomposition
takes the form 
\[
\tilde{u}_{k}=W_{k}z_{k}+U,
\]
where $U$ is the translate vector to be determined.

\begin{center}
\includegraphics[clip,width=0.6\linewidth]{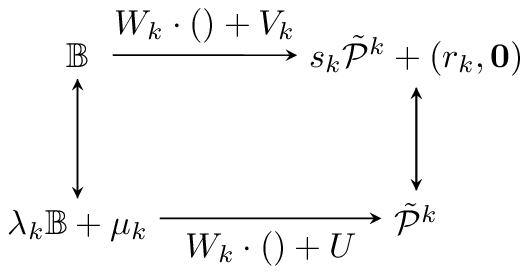}
\par\end{center}

From the above commutative diagram, we must have $\forall z_{k}\in\lambda_{k}\mathbb{B}+\mu_{k}$,
\[
s_{k}\left(W_{k}z_{k}+U\right)+\mathbf{0}=W_{k}\left(s_{k}z_{k}+r_{k}\right)+V_{k}.
\]
Solve the above equality and we will have 
\[
U=\lambda_{k}V_{k}-W_{k}\mu_{k},
\]
and the overall scheduling policy (\ref{eq:SchdPlcy}) follows from
the composition of $z_{k}$ and $\tilde{u}_{k}$.\hfill \qed

\bibliographystyle{IEEEtranS}
%\bibliography{CDC2016}

\end{document}